\newcommand{\ints}{\mathbb{Z}}
\newcommand{\cx}{\mathbb{C}}
\newcommand{\cD}{\mathcal{D}}
\newtheorem{thm}{Theorem}[section]
\newtheorem{lem}[thm]{Lemma}
\newtheorem{example}{Example}[section]
\newcommand{\zed}{\ensuremath{\mathbb{Z}}}
\DeclareSymbolFont{bbold}{U}{bbold}{m}{n}
\DeclareSymbolFontAlphabet{\mathbbold}{bbold}
\newcommand{\ones}{\mathbbold{1}}
\title{Some Nonexistence Results for Strong External Difference Families Using Character Theory}
\author{
 William~J.~Martin\\[2ex]
\it Department of Mathematical Sciences\\
\it Worcester Polytechnic Institute\\
\it 100 Institute Road, 
Worcester, MA 01609,
USA \\
\texttt{martin@wpi.edu}\\[2ex]
\and
Douglas~R.~Stinson\\[2ex]
\it David R.\ Cheriton School of Computer Science\\
\it  University of Waterloo\\
\it Waterloo, Ontario, N2L 3G1, Canada \\
 \texttt{dstinson@uwaterloo.ca}
}
\date{}
\begin{document}
\maketitle

\begin{abstract}
In this paper, we study the existence of $(v,m,k,\lambda)$-strong external difference 
families (SEDFs). We use character-theoretic techniques to show that no  SEDF exists
when $v$ is prime, $k > 1$ and $m > 2$. In the case where $v$ is the product of two 
distinct odd primes, some necessary conditions are derived, which can be used to 
rule out certain parameter sets. Further, we show that, when $m=3$ or $4$ and 
$v > m$,  a $(v,m,k,\lambda)$-SEDF does not exist. 
\end{abstract}

\section{Introduction}

Motivated by applications to \emph{algebraic manipulation detection codes} (or AMD 
codes) \cite{CDFPW,CFP,CPX},  Paterson and Stinson introduced {\em strong external 
difference families} (or SEDFs)  in \cite{PS}. SEDFs are closely related to but stronger than 
{\it difference systems of sets} \cite{Lev} and {\it external difference families}  (or EDFs) 
\cite{OKSS}. In \cite{PS}, it was noted that optimal AMD codes can be obtained from 
EDFs, whereas  optimal strong AMD codes can be obtained from SEDFs.  See \cite{PS} 
for a discussion of these and related structures and how they relate to AMD codes.
In this paper, we focus on SEDFs, the existence of which is an interesting mathematical 
problem in its own right, independent of any applications to AMD codes. It is important to 
note that  these bear no known relation to ``strong difference families'' (SDFs) introduced 
by Buratti  \cite{Buratti} in 1999, in spite of the misfortune of inadvertently  similar 
terminology. (See \cite{BurattiPasotti} and \cite{Momi} for some related applications.)

We recall the definition of SEDFs from \cite[Defn.\ 2.5]{PS} now. Let $G$ be a finite abelian 
group of order $v$ (written multiplicatively)  with identity element $1\in G$. For parameters 
$k,\lambda, m$ such that
\begin{equation}  
\label{Epars}
  k^2 (m-1) = \lambda (v-1)
 \end{equation}
we seek (pairwise disjoint) subsets $D_1,\ldots, D_m \subseteq G$ with $|D_j|=k$ 
$(1 \leq j \leq m)$ satisfying
\begin{equation}  
\label{E1}
 \sum_{\ell \neq j} D_j D_\ell^{-1} = \lambda (G-1) 
 \end{equation}
for each $1 \le j \le m$, where the above equation holds in the group algebra $\cx[G]$ 
and $D_\ell^{-1}$ denotes $\sum_{g \in D_\ell} g^{-1}$.  For convenience, we capture 
the remaining defining conditions here:
\begin{equation}  
\label{E0}
D_1,D_2,\ldots, D_m \subset G, \qquad |D_j|=k \ \ \forall j, \qquad |G|=v
 \end{equation}
 where $G$ is a finite abelian group.  A collection $(D_1,\ldots, D_m)$ satisfying conditions 
 (\ref{E1}) and (\ref{E0}) is denoted as a \emph{$(v,m,k,\lambda)$-SEDF}.

Let $\cD$ denote the union of all the sets $D_j$: in group algebra notation,
$$ \cD = \sum_{j=1}^m D_j . $$
With this, Equation (\ref{E1}) becomes
\begin{equation}  
\label{E2}
  D_j \cD^{-1} - D_j D_j^{-1} = \lambda (G-1) .
 \end{equation}
 
Two infinite classes of SEDFs are known, when $k=1$ and $m=2$.  These were shown in \cite{PS}.
 
\begin{example}
\label{exam1}
Let $G = (\zed_{k^2+1}, +)$, 
$D_1 = \{0, 1, \dots, k-1\}$ and $D_2 = \{k, 2k, \dots, k^2\}$.
This is a $(k^2+1,2,k,1)$-SEDF.
\end{example}

\begin{example}
\label{exam2}
Let $G = (\zed_{v}, +)$ and
$D_i = \{i\}$ for $0 \leq i \leq v-1$.
This is a $(v,v,1,1)$-SEDF.
\end{example}

The following theorem is also proved in \cite{PS}.

\begin{thm}
\label{SEDF-nonexistence}
\cite{PS}
There does not exist a $(v,m,k,1)$-SEDF with $m \geq 3$ and $k > 1$.
\end{thm}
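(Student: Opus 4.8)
\emph{Plan of proof.} The plan is to argue by contradiction using character theory. Suppose $(D_1,\dots,D_m)$ is a $(v,m,k,1)$-SEDF in an abelian group $G$ of order $v$ with $m\ge 3$ and $k>1$; by \eqref{Epars} we have $k^2(m-1)=v-1$. Fix any nontrivial character $\chi$ of $G$ and set $a_j=\chi(D_j)$ and $s=\chi(\cD)=\sum_{j=1}^m a_j$. Applying $\chi$ to \eqref{E2}, and using $\chi(G)=0$ together with $\chi(D_\ell^{-1})=\overline{\chi(D_\ell)}$, yields
\[
a_j\,\overline{s}=|a_j|^2-1\qquad(1\le j\le m),
\]
so in particular each $a_j\overline{s}$ is real. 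The endgame will be to show that $s$ must vanish for \emph{every} nontrivial $\chi$, which pins $\cD$ down tightly enough to contradict \eqref{Epars}.

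The heart of the matter is to eliminate the case $s\ne 0$ (for the fixed $\chi$). There, multiplying the identity by $s$ shows $a_j=t_j s$ with $t_j=(|a_j|^2-1)/|s|^2\in\re$; substituting back gives $t_j^2-t_j=1/|s|^2=:c>0$, so every $t_j$ is one of the two distinct real roots $t_\pm$ of $X^2-X-c$, where $t_+>1>0>t_-$ and $t_++t_-=1$. Since $\sum_j t_j=1$ and $m\ge 3$, the $t_j$ are not all equal to $t_+$ (the sum would then exceed $m\ge 3$) nor all equal to $t_-$ (the sum would be negative); so, letting $p$ be the number of indices with $t_j=t_+$, we have $1\le p\le m-1$ and $p\,t_++(m-p)\,t_-=1$. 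Together with $t_++t_-=1$ this is a $2\times2$ linear system for $t_\pm$ of determinant $2p-m$, which is nonzero (were $2p=m$, the two equations would force $p=1$ and $m=2$, excluded), so $t_+\in\mathbb{Q}$. On the other hand $|s|^2=s\overline{s}$ is an algebraic integer (a product of algebraic integers) which is rational (it equals $1/c$), hence a positive integer $N$; thus $c=1/N$ and $t_+$ is a rational root of $NX^2-NX-1$. This forces $(2Nt_+-N)^2=N^2+4N$, so $N^2+4N=(N+2)^2-4$ is a perfect square; but $(N+2-\ell)(N+2+\ell)=4$ has no solution with $N\ge 1$. This contradiction shows $s=\chi(\cD)=0$.

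Since $\chi$ was arbitrary, $\chi(\cD)=0$ for every nontrivial $\chi$, and then the displayed identity forces $|\chi(D_j)|=1$ for every nontrivial $\chi$ and every $j$. Parseval's identity applied to $D_1$ now gives $vk=\sum_\chi|\chi(D_1)|^2=k^2+(v-1)$, so $v(k-1)=k^2-1$ and hence $v=k+1$ (using $k>1$). Substituting into \eqref{Epars} yields $k^2(m-1)=k$, i.e.\ $k(m-1)=1$, which is impossible for $k\ge 2$ and $m\ge 3$. This contradiction would complete the proof.

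I expect the genuinely delicate step to be the elimination of the case $s\ne 0$: for $m\ge 5$ the ``two-valued'' pattern of the $t_j$ is by itself consistent, and the contradiction only emerges after observing that $|\chi(\cD)|^2$ is a rational integer and combining this with the rationality of $t_+$ to produce the Pell-type equation $(N+2)^2-\ell^2=4$. (Alternatively, one can solve $t_+$ explicitly as $(m-1-p)/(m-2p)$ and reduce to the elementary fact that $uw\nmid(w-u)^2$ for integers $1\le u<w$, which follows from a one-line gcd argument.) The opening character computation and the closing Parseval calculation should both be routine.
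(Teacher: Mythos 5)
Your proof is correct, but note that it does not parallel anything in this paper: Theorem~\ref{SEDF-nonexistence} is stated here without proof, being imported from \cite{PS}, where the argument is combinatorial. What you have written is an independent, character-theoretic proof, and it is in effect a $\lambda=1$ specialization \emph{and strengthening} of the machinery this paper develops for general $\lambda$ in Section~3: your identity $a_j\overline{s}=|a_j|^2-1$ is (\ref{Echi1}) with $\lambda=1$, your $t_j$ coincide with the $\alpha_j$ of Lemma~\ref{Lparallelchi}, and your observation that every $t_j$ satisfies the single quadratic $X^2-X-c=0$ recovers Lemma~\ref{L<=2} (and does so for all $m\ge 3$, more directly than the paper's linear-relation argument). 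Where you genuinely go beyond the paper is in killing the case $\chi(\cD)\neq 0$ for \emph{every} split $(p,m-p)$: Lemma~\ref{Lsplit} only excludes the splits $(0,m)$, $(1,m-1)$ and $(m/2,m/2)$, and the authors' own $m=5$ example shows the surviving splits are consistent with their constraints for general $\lambda$. Your extra arithmetic input --- $|\chi(\cD)|^2$ is a rational algebraic integer, hence a positive integer $N$, so $t_+$ would be a rational root of $NX^2-NX-1$ and $N^2+4N$ a perfect square, impossible since $(N+1)^2<N^2+4N<(N+2)^2$ for $N\ge 1$ --- is exactly what is special to $\lambda=1$, and each step checks out (the determinant $2p-m$ is nonzero because $2p=m$ would force $p=1$, $m=2$; the rationality of $t_+$ makes $c=t_+^2-t_+$ rational, hence $N=1/c$ is a rational algebraic integer). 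The closing Parseval computation is also sound, though you could shorten it: $\chi(\cD)=0$ for all $\chi\neq\ones$ forces $\cD=G$, i.e.\ $v=mk$, which contradicts Lemma~\ref{num.lem} directly.
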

 
 In \cite{PS}, the authors ask if there are any additional parameters for which SEDFs 
 exist.  Some necessary conditions for the existence of SEDFs have recently been 
 obtained  by Huczynska and Paterson \cite{HP} using combinatorial techniques. Their 
 results include a substantially complete treatment of the case $\lambda = 2$.
 In this paper, we  apply linear characters of $G$  to both sides of the equation 
 (\ref{E2}) to rule out the existence of SEDFs in groups of prime order. We also give 
 some partial results (i.e., necessary conditions) for (abelian) groups whose order 
 $v$ is the product of two distinct primes.
 
 We now state and prove a simple numerical result, concerning parameters for SEDFs, 
 that we will use later.
 
 \begin{lem}
 \label{num.lem}
 There does not exist a $(v,m,k,\lambda)$-SEDF with $v=mk$ and $k > 1$.
 \end{lem}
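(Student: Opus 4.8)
The plan is to treat this as what the surrounding text calls it --- a purely numerical fact about admissible parameters --- and avoid any group-algebra manipulation. First I would substitute $v = mk$ into the parameter equation (\ref{Epars}), obtaining $k^2(m-1) = \lambda(mk-1)$. The crux is the elementary observation that $\gcd(k, mk-1) = 1$: any common divisor of $k$ and $mk-1$ divides $mk - (mk-1) = 1$. Hence also $\gcd(k^2, mk-1) = 1$, and since $mk-1$ divides the left-hand side $k^2(m-1)$, it must in fact divide $m-1$.

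The second step is to note that this divisibility cannot hold when $m \geq 2$ and $k > 1$. In that regime $m-1 \geq 1 > 0$, while $mk - 1 \geq 2m-1 > m-1$ (the inequality $mk-1 > m-1$ is exactly where $k>1$ is used), so a positive integer strictly exceeding $m-1$ cannot divide $m-1$ --- a contradiction. The only boundary case is $m=1$, in which (\ref{E1}) is an empty condition forcing $\lambda=0$, so there are no external differences at all; such a degenerate object is not a genuine SEDF, and the lemma is understood with $m \geq 2$ in any event.

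I do not expect a real obstacle here; the work is entirely in isolating the coprimality of $k$ and $mk-1$ as the driving fact, rather than any finer structure of the blocks $D_j$. (As a consistency check, one may observe that under the hypothesis $v = mk$ one has $\cD = \sum_j D_j = G$ in $\cx[G]$, so (\ref{E2}) collapses to $D_j D_j^{-1} = \lambda + (k-\lambda)(G-1)$, i.e.\ each $D_j$ would be a $(v,k,k-\lambda)$-difference set; but the character values $|\chi(D_j)|^2 = \lambda$ alone give no contradiction, confirming that the number-theoretic argument above is the essential one. One can also note that specializing to $\lambda = 1$ recovers the corresponding instance of Theorem~\ref{SEDF-nonexistence}.)
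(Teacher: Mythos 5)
Your proof is correct and follows essentially the same route as the paper's: both arguments substitute $v=mk$ into (\ref{Epars}) and exploit $\gcd(k,mk-1)=1$ to extract a divisibility constraint that is then contradicted by a size comparison (the paper deduces $k^2\mid\lambda$ and hence $m-1=t(km-1)$, while you deduce $(mk-1)\mid(m-1)$ directly; these are interchangeable). No issues.
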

 
 \begin{proof}
 From (\ref{Epars}), we have $k^2 (m-1) = \lambda (km-1)$. Clearly, $\gcd (k, km-1) =1$, 
 so it follows that $k^2 \mid \lambda$. Thus, $\lambda = t k^2$ for some positive integer 
 $t$, and hence $m-1 = t(km-1)$. This shows that $k=t=1$.
 \end{proof}

\section{Some facts about characters}

We briefly review some basic facts about characters of finite abelian groups. These can be 
found, for example, in \cite{leder}.

For a finite abelian group $G$, there are exactly $v=|G|$ distinct homomorphisms from $G$ 
to the multiplicative group of complex numbers. First consider a cyclic group $A$ of order 
$v$ written multiplicatively as  $A =\{1=x^0,x^1, \ldots, x^{v-1}\}$ with $x^v=1$. Let 
$\omega$ denote a primitive complex $v^{\rm th}$ root of unity, e.g., $\omega=e^{2\pi i/v}$. 
For $0\le a < v$, take $\chi_a : A \rightarrow \cx^*$ via
$$ \chi_a( x^b) = \omega^{ab} . $$
For $a=0$, we have the trivial character, which we will denote by $\ones$.  Every finite abelian 
group $G$ is isomorphic to a direct product of finite cyclic groups and their  characters can be 
pieced together to give $|G|$ distinct characters of $G$. If 
$\phi: G \rightarrow A_1 \times \cdots \times A_r$ is an isomorphism where $A_j$
is cyclic of order $m_j$, say $A_j = \{x_j^0,x_j^1,\ldots, x_j^{m_j-1}\}$ with $x_j^{m_j}=x_j^0$, 
and $\omega_j$ is a primitive complex $m_j^{\rm th}$ root of unity,  we may directly 
associate\footnote{We note that this choice, while simple, is not canonical.}  a character 
$\chi_a$ of $G$ to each element. For $\phi(a)= (x_1^{a_1},\ldots, x_r^{a_r})$ and $\phi(b)= 
(x_1^{b_1},\ldots, x_r^{b_r})$, define
$$ \chi_a(b) = \omega_1^{a_1b_1} \cdots \omega_r^{a_rb_r} $$
which is readily verified to satisfy $\chi_a(gh)=\chi_a(g)\chi_a(h)$. 

When $G$ is abelian, the product of characters $(\chi \psi)(g)=\chi(g)\psi(g)$ is again a
character and, provided $G$ is finite, this gives us a group of characters isomorphic to 
$G$. So we can label the $|G|$ distinct characters $\{ \chi_a \mid a\in G\}$.  With the 
isomorphism $a\mapsto \chi_a$ between $G$ and its group of characters chosen above,
these characters  satisfy the following ``orthogonality'' relations:  $\chi_a(g) = \chi_g(a)$ and 
$$ \sum_{g\in G} \overline{\chi(g)} \psi(g) = \begin{cases} v  
&\mbox{if } \chi = \psi \\  0 & \mbox{otherwise. } 
\end{cases} $$
So, for any non-principal character (i.e., $\chi \neq \ones$), we have $\sum_g \chi(g) = 0$. 
Likewise $\sum_{\chi} \chi(g) =0$ unless $g=1$ in $G$, in which case the sum equals $v$.

Each $\chi$ extends to an algebra homomorphism from the group algebra $\cx[G]$ to $\cx$. 
Since the ``Fourier  matrix'' whose columns are the $v$ characters is invertible, this provides 
us a bijection from $\cx[G]$ to the vector space $\cx^v$. Under this bijection, we have the 
group algebra element $0=\sum_a 0 \cdot a$ mapping to the vector $\mathbf{0}$,  the group 
identity $1\in G$ mapping to the all ones vector in  $\cx^v$, and the element 
$$ G = \sum_{a\in G} a $$
mapping to the vector $(v,0,\ldots,0)$.  So $G-1$ maps to $(v-1,-1,\ldots,-1)$.

Since $G$ is finite, every value $\chi(g)$ lies on the unit circle in $\cx$.  Clearly, 
$\chi(g^{-1})=\overline{\chi(g)}$. For $S\subseteq G$, we abbreviate the group algebra 
element $\sum_{a\in S} a$ to simply ``$S$''. We then easily see that, for 
$S^{-1} := \sum_{a\in S} a^{-1}$, we have
$$ \chi( S^{-1} ) = \overline{ \chi(S) }. $$
\bigskip

We remark that most of our discussion below never relies on the particular structure of the 
group $G$. However, we do assume that $G$ is an abelian group of order $v$, not 
necessarily cyclic.

\section{Applying characters to prove results about SEDFs}

If we apply the trivial character to Equation (\ref{E2}), we obtain Equation (\ref{Epars}). 
Suppose, on the other hand, that $\chi$ is  a non-principal character. Applying $\chi$ to 
Equation (\ref{E2}), we obtain 
\begin{equation}  
\label{Echi1}
  \chi(D_j) \overline{ \chi( \cD) }  - \chi(D_j) \overline{ \chi( D_j) }  =  - \lambda  .
 \end{equation}
Since the right-hand side is non-zero, we immediately have $ \chi(D_j) \neq 0 $ for all $j$.  
Here are some more basic observations: 
 
\begin{lem}  \label{Lbasics}
\begin{itemize}
\item[(a)] For any character $\chi$ of $G$, 
\begin{equation}  
\label{Esumchi}
  \chi(\cD) = \sum_{j=1}^m \chi(D_j) 
 \end{equation} 
\item[(b)]  For any character $\chi$ of $G$,
\begin{equation}  
\label{FACTchi}
  \chi(D_j) \neq 0 \qquad \forall \ 1\le j \le m, \qquad \forall \ \chi \neq \ones.  
 \end{equation} 
\item[(c)]  If $\chi$ is a non-principal character such that $\chi( \cD)=0$, then, for all $1\le j \le m$,
$$ \chi(D_j) \overline{ \chi( D_j) }  =   \lambda  .$$
\item[(d)]  $\ones(\cD) = |\cD|=mk$ and, except when $\cD=G$ or $\cD=\emptyset$, there is at 
least one non-principal  character $\chi$ satisfying $\chi(\cD)\neq 0$. 
\end{itemize}
\end{lem}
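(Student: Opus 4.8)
Parts (a), (b) and (c) require essentially no work beyond unwinding definitions and the displayed Equation~(\ref{Echi1}). For (a), I would simply note that each character $\chi$ extends to an algebra homomorphism $\cx[G]\to\cx$, hence is in particular $\cx$-linear, so $\chi(\cD)=\chi\!\left(\sum_{j=1}^m D_j\right)=\sum_{j=1}^m\chi(D_j)$; pairwise disjointness of the $D_j$ is not even needed here. For (b) and (c), I would rewrite (\ref{Echi1}), which holds for every non-principal $\chi$, in the factored form
\[
  \chi(D_j)\left(\overline{\chi(\cD)}-\overline{\chi(D_j)}\right)=-\lambda .
\]
Since $\lambda>0$ the right-hand side is nonzero, so neither factor on the left can vanish; in particular $\chi(D_j)\neq 0$, which is (b). If moreover $\chi(\cD)=0$, the same identity collapses to $-\chi(D_j)\overline{\chi(D_j)}=-\lambda$, i.e.\ $\chi(D_j)\overline{\chi(D_j)}=\lambda$, which is (c).

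For (d), the first assertion is immediate: applying $\ones$ to a group algebra element sums its coefficients, and since $D_1,\dots,D_m$ are pairwise disjoint sets each of size $k$, the element $\cD=\sum_j D_j$ has all coefficients in $\{0,1\}$ with exactly $mk$ of them equal to $1$, so $\ones(\cD)=|\cD|=mk$. For the second assertion I would argue by contradiction: assume $\chi(\cD)=0$ for every non-principal character $\chi$. As recalled in Section~2, the Fourier matrix whose columns are the $v$ characters is invertible, so a group algebra element is uniquely determined by its list of character values; comparing with the fact (also recalled there) that $G=\sum_{a\in G}a$ has character values $(v,0,\dots,0)$, the element $\cD$, whose character values are $(mk,0,\dots,0)$, must equal $\tfrac{mk}{v}\,G$. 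Equivalently, via Fourier inversion $c_g=\tfrac1v\sum_{\chi}\overline{\chi(g)}\,\chi(\cD)=\tfrac{mk}{v}$ for the coefficient $c_g$ of each $g$ in $\cD$. But these coefficients lie in $\{0,1\}$, so $mk/v\in\{0,1\}$: the value $0$ forces $\cD=\emptyset$ and the value $1$ forces $mk=v$ and $\cD=G$. Either way we contradict the hypothesis $\cD\notin\{\emptyset,G\}$, which proves (d).

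I do not expect any genuine obstacle here: the statement is a package of warm-up facts. The only points that warrant a little care are (i) invoking the correct normalization of the orthogonality/Fourier-inversion relation in part (d), and (ii) making explicit that the coefficients of $\cD$ really are $0$'s and $1$'s, which is exactly where pairwise disjointness of the $D_j$ enters; without disjointness one would only know the coefficients are nonnegative integers, and the conclusion of (d) would need a short extra remark.
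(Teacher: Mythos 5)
Your proposal is correct and follows essentially the same route as the paper: (a) by linearity, (b) and (c) by reading off the factored form of Equation~(\ref{Echi1}), and (d) by invertibility of the Fourier matrix forcing $\cD=\frac{mk}{v}G$, which is a genuine subset only when $\cD=\emptyset$ or $\cD=G$. Your added remarks about the $\{0,1\}$ coefficients and where disjointness enters are just explicit versions of what the paper leaves implicit.
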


\begin{proof}
Part (a) follows from linearity. Parts (b) and (c) follow immediately from equation (\ref{Echi1}).
For part (d), we make use of the fact that the Fourier matrix is invertible, which implies that
$\cx[G]$ is in bijective correspondence with $\cx^v$. For each
complex number $z$, we already 
know one preimage of $(z,0,\ldots,0)$; so if $\mathbf{g}\in \cx[G]$ with $\chi(\cD)=(z,0,\ldots,0)$, 
then  $\mathbf{g} = \frac{z}{v} G$.
\end{proof}

We now conjugate Equation (\ref{Echi1}) and obtain
\begin{equation}  
\label{Echi2}
  \chi(\cD)  = \frac{   | \chi(D_j) |^2 - \lambda }{  \overline{ \chi(D_j) } } \qquad \forall j ~ .  
 \end{equation} 
 
Note that, when $k=1$, the assumption that $\cD$ is a set (and not a multiset) forces 
$\cD=G$ and we have $\chi(\cD)=0$ for every non-trivial character $\chi$ of $G$. So 
our analysis does not apply in the case  $k=1$.

\begin{lem} 
\label{Lparallelchi}
Let $\chi \neq \ones$.  If  $\chi(\cD)\neq 0$, then there exist nonzero real numbers 
$\alpha_1,\ldots,\alpha_m$ such that $\chi(D_j) = \alpha_j \chi(\cD)$ for $j=1,\ldots, m$.
\end{lem}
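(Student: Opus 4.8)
The plan is to extract everything from Equation~(\ref{Echi2}), which already pins down the ratio between $\chi(D_j)$ and $\chi(\cD)$. Rewriting (\ref{Echi2}) by clearing the denominator gives
\[
  \overline{\chi(D_j)}\,\chi(\cD) \;=\; |\chi(D_j)|^2 - \lambda ,
\]
and the right-hand side is a real number. Hence $\overline{\chi(D_j)}\,\chi(\cD)\in\re$ for every $j$, and taking complex conjugates, $\chi(D_j)\,\overline{\chi(\cD)}\in\re$ as well. This is the one real idea in the argument: (\ref{Echi2}) is saying precisely that $\chi(D_j)\overline{\chi(\cD)}$ is a real scalar, which is exactly what forces the $\alpha_j$ to be real rather than merely complex.

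Next, since $\chi(\cD)\neq 0$ by hypothesis, I would divide through by $|\chi(\cD)|^2 = \chi(\cD)\overline{\chi(\cD)}$ and set
\[
  \alpha_j \;:=\; \frac{\chi(D_j)\,\overline{\chi(\cD)}}{|\chi(\cD)|^2}
  \;=\; \frac{|\chi(D_j)|^2 - \lambda}{|\chi(\cD)|^2}.
\]
By the previous paragraph $\alpha_j\in\re$, and multiplying back by $\chi(\cD)$ yields $\alpha_j\chi(\cD) = \chi(D_j)$, as required.

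Finally, I would check $\alpha_j\neq 0$. By Lemma~\ref{Lbasics}(b) we have $\chi(D_j)\neq 0$ because $\chi\neq\ones$, so $\alpha_j = 0$ would give $\chi(D_j) = \alpha_j\chi(\cD) = 0$, a contradiction (equivalently, $\alpha_j=0$ would mean $|\chi(D_j)|^2=\lambda$, whence $\chi(\cD)=0$ by~(\ref{Echi2})). This completes the proof; as a free byproduct, summing $\chi(D_j)=\alpha_j\chi(\cD)$ over $j$ and invoking Lemma~\ref{Lbasics}(a) gives $\sum_{j=1}^m\alpha_j = 1$, though that identity is not needed for the statement. There is no real obstacle here beyond the bookkeeping with the nonvanishing of $\chi(D_j)$ and of $\chi(\cD)$.
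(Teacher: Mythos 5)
Your proof is correct and follows essentially the same route as the paper: both arguments read the claim directly off Equation~(\ref{Echi2}), the only cosmetic difference being that you normalize by $\overline{\chi(\cD)}$ to get $\alpha_j = (|\chi(D_j)|^2-\lambda)/|\chi(\cD)|^2$, whereas the paper inverts $\overline{\chi(D_j)}$ to get the equivalent expression $\alpha_j = |\chi(D_j)|^2/(|\chi(D_j)|^2-\lambda)$. The nonvanishing of $\alpha_j$ via Lemma~\ref{Lbasics}(b) matches the paper's reasoning as well.
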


\begin{proof} We know 
\[\overline{\chi(D_j)}^{-1} = \frac{\chi(D_j)}{|\chi(D_j)|^2},\] so 
we obtain 
\[ \chi(\cD) = \left(  \frac{   | \chi(D_j) |^2 - \lambda }{ |\chi(D_j)|^2  } \right) \chi(D_j)
\]
from equation (\ref{Echi2}).
$\chi(\cD)$ and $\chi(D_j)$ are both nonzero, so 
\[ \alpha_j = \frac{ |\chi(D_j)|^2  }{   | \chi(D_j) |^2 - \lambda } \] 
is nonzero. Further, $\alpha_j$ is clearly a real number.
\end{proof}

So let's  fix $\chi \neq \ones$ and, assuming $\chi(\cD)\neq 0$, denote $\chi(\cD)= X$ 
and $\chi(D_j)=x_j=\alpha_j X$. We obtain a system of quadratic equations from 
(\ref{Echi1}) which gives us restrictions on the values $\alpha_j$. For $m=2$, this 
simply tells us $\alpha_1 \alpha_2 = -\lambda/X\bar{X}$ and rules nothing out. However, 
for $m \geq 3$, we obtain some useful information. We first consider the case $m=3$.
 
 \begin{thm}
 \label{m=3}
 There does not exist a $(v,3,k,\lambda)$-SEDF for any $v> 3$.
 \end{thm}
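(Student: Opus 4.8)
The plan is to exploit the structural constraint from Lemma~\ref{Lparallelchi} together with the quadratic relations coming from~(\ref{Echi1}) when $m=3$. First, by Lemma~\ref{num.lem} together with Lemma~\ref{Lbasics}(d), we may assume $\cD\neq G$ and $\cD\neq\emptyset$, so there is at least one non-principal character $\chi$ with $\chi(\cD)=X\neq 0$. For this $\chi$, Lemma~\ref{Lparallelchi} gives nonzero reals $\alpha_1,\alpha_2,\alpha_3$ with $\chi(D_j)=\alpha_j X$; in particular, summing, $\alpha_1+\alpha_2+\alpha_3=1$. Substituting $\chi(D_j)=\alpha_j X$ and $\chi(\cD)=X$ into~(\ref{Echi1}) and writing $\rho=X\overline{X}=|X|^2>0$, the equation $\chi(D_j)\overline{\chi(\cD)}-\chi(D_j)\overline{\chi(D_j)}=-\lambda$ becomes $\alpha_j\rho - \alpha_j^2\rho = -\lambda$, i.e. $\alpha_j^2-\alpha_j = \lambda/\rho$ for each $j$. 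So all three $\alpha_j$ satisfy the \emph{same} quadratic $t^2-t-\lambda/\rho=0$, which has only two roots.

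Hence at least two of $\alpha_1,\alpha_2,\alpha_3$ are equal, say $\alpha_i=\alpha_j$; then $|\chi(D_i)|=|\chi(D_j)|$. The next step is to extract a contradiction from this coincidence, and here is where I expect the main work to lie. The quadratic $t^2-t = \lambda/\rho$ has roots whose product is $-\lambda/\rho<0$ and whose sum is $1$; so the two roots are a positive number and a negative number. Since $\alpha_1+\alpha_2+\alpha_3=1$ and the $\alpha_j$ take at most two values (one positive, one negative), the only possibilities are: all three equal (impossible, since a single root would have to equal $1/3$, forcing the other root to be $2/3$, contradicting that the roots have opposite signs), or exactly two equal. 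So without loss of generality $\alpha_1=\alpha_2=:\alpha$ and $\alpha_3=1-2\alpha$, with $\{\alpha,\,1-2\alpha\}$ being precisely the two roots, giving $\alpha+(1-2\alpha)=1$ automatically but also $\alpha(1-2\alpha)=-\lambda/\rho$, and moreover $1-2\alpha\neq\alpha$, i.e. $\alpha\neq 1/3$.

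From $|\chi(D_1)|=|\chi(D_2)|$ holding for \emph{every} non-principal $\chi$ with $\chi(\cD)\neq 0$, I would argue that $D_1$ and $D_2$ have the same "character magnitudes" across all such $\chi$; combined with the fact (Lemma~\ref{Lbasics}(b)) that $\chi(D_j)\neq 0$ for every non-principal $\chi$, and that for $\chi$ with $\chi(\cD)=0$ we have $|\chi(D_j)|^2=\lambda$ for all $j$ (Lemma~\ref{Lbasics}(c)), one gets $|\chi(D_1)|=|\chi(D_2)|$ for \emph{all} non-principal $\chi$. Therefore $D_1D_1^{-1}$ and $D_2D_2^{-1}$ have the same non-trivial character values, so $D_1D_1^{-1}-D_2D_2^{-1}$ is a multiple of $G$ in $\cx[G]$; evaluating the trivial character shows the multiple is $0$, so $D_1D_1^{-1}=D_2D_2^{-1}$ as group-algebra elements. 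Subtracting the defining equations~(\ref{E1}) for $j=1$ and $j=2$ then yields $D_1\cD^{-1}=D_2\cD^{-1}$, hence $(D_1-D_2)\cD^{-1}=0$; applying a character $\chi$ with $\chi(\cD)\neq 0$ gives $\chi(D_1)=\chi(D_2)$, and applying one with $\chi(\cD)=0$ gives no information directly, so I would instead note $(D_1-D_2)\cD^{-1}=0$ together with $\cD^{-1}\neq 0$ in $\cx[G]$ forces $D_1=D_2$ once we rule out zero divisors on the relevant support — more carefully, one shows $\chi(D_1)=\chi(D_2)$ for all $\chi$ by handling the $\chi(\cD)=0$ case via $|\chi(D_1)|^2=|\chi(D_2)|^2=\lambda$ and the magnitude argument, concluding $D_1=D_2$, which contradicts disjointness of the blocks (as $k\geq 1$, $D_1\cap D_2=\emptyset$ forces $D_1\neq D_2$). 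The subtle point to nail down is that "$|\chi(D_1)|=|\chi(D_2)|$ for all non-principal $\chi$" genuinely upgrades to "$D_1=D_2$" here; the cleanest route is the two-step argument above (first equality of $D_jD_j^{-1}$, then use~(\ref{E1}) to transfer it to $D_j\cD^{-1}$, then characters), and the case $\alpha=1/3$ being excluded is exactly what prevents the degenerate escape.
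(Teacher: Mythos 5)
Your opening is a genuinely different---and in fact potentially cleaner---route than the paper's: substituting $\chi(D_j)=\alpha_j X$ into (\ref{Echi1}) to conclude that all three $\alpha_j$ are roots of the single quadratic $t^2-t-\lambda/\rho=0$ is correct, as is the observation that the two roots sum to $1$ and have product $-\lambda/\rho<0$. But you then make an arithmetic slip at exactly the moment the proof should end. Writing $\alpha_1=\alpha_2=\alpha$ and $\alpha_3=1-2\alpha$ (from $\sum_j\alpha_j=1$), the set $\{\alpha,\,1-2\alpha\}$ must be precisely the root set of the quadratic, so Vieta forces $\alpha+(1-2\alpha)=1$; this is not ``automatic'' --- the left-hand side is $1-\alpha$, so the equation forces $\alpha=0$, contradicting Lemma~\ref{Lparallelchi}. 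Together with your (correct) elimination of the all-equal case, that single line completes the proof. For comparison, the paper reaches the same endpoint by different mechanics: it conjugates and subtracts pairs of the equations $\sum_{\ell\neq j}x_j\bar{x}_\ell=-\lambda$ to obtain $\alpha_1\alpha_2=\alpha_2\alpha_3=\alpha_3\alpha_1$, hence all $\alpha_j$ equal, and then derives a sign contradiction.

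Because you missed that immediate contradiction, you instead launch into an argument aiming at $D_1=D_2$, and that part has real gaps. First, the pair of indices carrying the repeated root is chosen separately for each character; you cannot assume it is $\{1,2\}$ for every non-principal $\chi$ with $\chi(\cD)\neq0$, so the premise ``$|\chi(D_1)|=|\chi(D_2)|$ for every such $\chi$'' is unjustified. Second, even granting it, $D_1D_1^{-1}=D_2D_2^{-1}$ does not yield $D_1=D_2$ (any translate $gD_1$ has the same difference multiset), and your attempted repair via $(D_1-D_2)\cD^{-1}=0$ cannot see characters with $\chi(\cD)=0$, where equality of the magnitudes $|\chi(D_1)|^2=|\chi(D_2)|^2=\lambda$ says nothing about equality of the values themselves. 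None of this machinery is needed: correct the Vieta computation and stop there.
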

 
 \begin{proof}
First, since $v > 3$, it follows from Lemma \ref{num.lem} that we cannot have $\cD=G$. 
Hence, from Lemma \ref{Lbasics}(d), there is a non-principal character  $\chi$ such that 
$\chi( \cD)\neq 0$.  Now, from (\ref{Echi1}), we have
 $$ \begin{array}{ccccccc}
   x_1 \bar{x}_2 &   +   &   x_1 \bar{x}_3 &    &       &   =   &   -\lambda \\
   x_2 \bar{x}_1 &   &   &    +   &   x_2 \bar{x}_3 &     =   &   -\lambda \\
     &    &   x_3 \bar{x}_1 &   +   &   x_3 \bar{x}_2 &      =   &   -\lambda .
     \end{array} $$
We conjugate the second equation and subtract it from the first to obtain
$ x_1 \bar{x}_3 = \bar{x}_2 x_3$. 
In the same manner, we find  $x_1 \bar{x}_2 = x_2 \bar{x}_3 = x_3 \bar{x}_1$. 
Hence, $\alpha_1 \alpha_2 = \alpha_2 \alpha_3 = \alpha_3 \alpha_1$.
This forces $\alpha_1=\alpha_2=\alpha_3$ since all $\alpha_i$'s are nonzero.  
But now we have a contradiction: our first equation becomes
$$ \alpha_1^2 X\bar{X} + \alpha_1^2 X\bar{X} = - \lambda $$
with the lefthand side nonnegative and the righthand side negative.  This shows 
$m=3$ cannot occur.
\end{proof}

For larger values of $m$, (\ref{Echi1}) gives us $m$ equations
\begin{equation}
\label{Exlam}
 \sum_{\ell \neq j} x_j \bar{x}_\ell  = -\lambda
\end{equation}
($j=1,\ldots,m$). For any distinct indices $r$ and $s$, we find
$$ x_r \bar{x}_s + \sum_{\ell \neq r,s} x_r \bar{x}_\ell = -\lambda $$
and, after conjugation,
$$ x_r \bar{x}_s + \sum_{\ell \neq r,s} \bar{x}_s x_\ell = -\lambda $$
so that
\begin{equation}
\label{allbuttwo}  \sum_{\ell \neq r,s} x_r \bar{x}_\ell  =  \sum_{\ell \neq r,s} \bar{x}_s x_\ell . 
\end{equation}
Therefore, we have 
$$  \sum_{\ell \neq r,s} \alpha_r \alpha_\ell X\bar{X}  =  \sum_{\ell \neq r,s}   \alpha_s \alpha_\ell X\bar{X}    . $$
Recall we are assuming $X\neq 0$. So we obtain
\begin{equation}
\label{Erst}
 \alpha_r  \left( \sum_{\ell \neq r,s} \alpha_\ell \right)  =  \alpha_s \left( \sum_{\ell \neq r,s}   \alpha_\ell \right)   . 
 \end{equation}
 
\begin{lem}
\label{L<=2}
For $m>3$, there can be only two possible values for $\alpha_j$ ($1\le j\le m$) when $X\neq 0$.
\end{lem}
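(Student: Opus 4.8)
The plan is to exploit Equation (\ref{Erst}), which holds for every pair of distinct indices $r,s$. Fix a value of $s$ and vary $r$ over the remaining indices: I would first dispose of the degenerate case where the common sum $\sum_{\ell\neq r,s}\alpha_\ell$ vanishes, and then handle the generic case where it does not.

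\begin{proof}
Assume $m>3$ and fix a non-principal character $\chi$ with $X=\chi(\cD)\neq 0$; write $\alpha_1,\dots,\alpha_m$ for the (nonzero, real) ratios supplied by Lemma \ref{Lparallelchi}, so that Equation (\ref{Erst}) holds for every pair of distinct indices $r,s$. Let $\sigma=\alpha_1+\cdots+\alpha_m$, and for a pair $r\neq s$ put $\sigma_{r,s}=\sigma-\alpha_r-\alpha_s=\sum_{\ell\neq r,s}\alpha_\ell$; then (\ref{Erst}) reads $\alpha_r\sigma_{r,s}=\alpha_s\sigma_{r,s}$.

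Consider first the possibility that $\sigma_{r,s}\neq 0$ for some particular pair $r,s$. Then (\ref{Erst}) gives $\alpha_r=\alpha_s$; call this common value $\beta$. Now take any third index $t$ (it exists since $m\geq 4$). Comparing the pairs $\{r,t\}$ and $\{s,t\}$: we have $\sigma_{r,t}=\sigma-\alpha_r-\alpha_t=\sigma-\beta-\alpha_t=\sigma-\alpha_s-\alpha_t=\sigma_{s,t}$, so the two sums coincide; if this common value is nonzero then (\ref{Erst}) forces $\alpha_r=\alpha_t$ and $\alpha_s=\alpha_t$, and if it is zero then $\alpha_t=\sigma-2\beta$ is pinned down as well. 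In either branch, running $t$ over all indices $\neq r,s$ shows that each $\alpha_t$ is forced to lie in the two-element set $\{\beta,\ \sigma-2\beta\}$, which is the desired conclusion.

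It remains to rule out the case $\sigma_{r,s}=0$ for \emph{every} pair $r\neq s$. Subtracting $\sigma_{r,t}=0$ from $\sigma_{s,t}=0$ gives $\alpha_r=\alpha_s$ for all $r,s$, so all the $\alpha_j$ are equal to a single value $\alpha\neq 0$; but then $\sigma_{r,s}=(m-2)\alpha\neq 0$ since $m>3$, a contradiction. Hence this case cannot arise, and in all remaining cases the $\alpha_j$ take at most two distinct values.
\end{proof}

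The main obstacle is the bookkeeping in the second paragraph: one must be careful that the ``zero'' sub-case $\sigma_{r,t}=0$ still pins $\alpha_t$ to the value $\sigma-2\beta$, so that no third value can sneak in; once one notices that $\sigma_{r,t}=\sigma_{s,t}$ automatically whenever $\alpha_r=\alpha_s$, the argument closes cleanly. No use is made of the quadratic equations (\ref{Exlam}) themselves beyond what is already encoded in (\ref{Erst}), so the proof is purely linear-algebraic in the $\alpha_j$.
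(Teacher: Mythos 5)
Your proof is correct and rests on the same mechanism as the paper's: Equation (\ref{Erst}) yields the dichotomy that for each pair $r\neq s$ either $\alpha_r=\alpha_s$ or $\sum_{\ell\neq r,s}\alpha_\ell=0$, and the latter pins every remaining $\alpha_t$ to a single fixed value (the paper phrases the same idea contrapositively: three pairwise distinct values among $\alpha_r,\alpha_s,\alpha_t$ force $\alpha_s=\alpha_t$). One harmless slip: in the sub-case $\sigma_{r,t}=0$ the pinned value is $\alpha_t=\sigma-\alpha_r=\sigma-\beta$, not $\sigma-2\beta$; since it is still a single quantity independent of $t$, the conclusion that at most two values occur is unaffected.
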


\begin{proof} Consider three distinct indices $r,s,t$. If $\alpha_r\neq \alpha_s$, then 
Equation (\ref{Erst}) gives
$$ \alpha_t = - \sum_{\ell \neq r,s,t} \alpha_\ell . $$
Likewise, if $\alpha_r \neq \alpha_t$, then 
$$ \alpha_s = - \sum_{\ell \neq r,s,t} \alpha_\ell ,$$
i.e., $\alpha_s=\alpha_t$.  \end{proof}

\subsection{Highly uneven---or even---splits cannot occur}

We first introduce some notation. We assume that $m > 3$, and
$$ \{ \alpha_1,\ldots, \alpha_m\} \subseteq \{ \alpha, \beta \}$$
with $\alpha_j = \alpha$ for exactly $A$ values of $j$ and $\alpha_j = \beta$ for 
exactly $B$ values of $j$. Without loss of generality, we assume $0\le A\le B$, 
and we know that $A+B=m$. The ordered pair $(A,B)$ will be called the {\em split} 
of $(\alpha_1,\ldots, \alpha_m)$.

\begin{lem}
\label{Lsplit}
Let $m > 3$ and $\chi \neq \ones$ with $X\neq 0$. Then the split of 
$(\alpha_1,\ldots, \alpha_m)$ cannot be $(0,m)$, $(1,m-1)$ or $(m/2,m/2)$.
\end{lem}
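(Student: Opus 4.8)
The plan is to work with the split $(A,B)$ directly, substituting the two-valued assumption $\{\alpha_1,\dots,\alpha_m\}\subseteq\{\alpha,\beta\}$ into Equation~(\ref{Exlam}) and extracting a small system of equations in $\alpha,\beta,A,B$. Writing $P=X\bar X>0$, Equation~(\ref{Exlam}) applied to an index $j$ with $\alpha_j=\alpha$ reads $\alpha P\bigl((A-1)\alpha+B\beta\bigr)=-\lambda$, and applied to an index $j$ with $\alpha_j=\beta$ it reads $\beta P\bigl(A\alpha+(B-1)\beta\bigr)=-\lambda$. So the first task is to record these two relations
\begin{equation}
\label{Esplitsys}
\alpha\bigl((A-1)\alpha+B\beta\bigr)=\beta\bigl(A\alpha+(B-1)\beta\bigr)=-\frac{\lambda}{P},
\end{equation}
valid whenever $A\ge1$ and $B\ge1$; when $A=0$ only the second relation survives (with $A=0$).

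First I would dispose of $(0,m)$ and $(1,m-1)$. If $A=0$ then all $\alpha_j=\beta$, and the surviving relation is $\beta^2(m-1)P=-\lambda$, impossible since the left side is nonnegative and $\lambda>0$; this also follows from Lemma~\ref{Lbasics}(c) together with the observation $\chi(\cD)=\sum_j\chi(D_j)=m\beta X$, forcing $m\beta=1$ and hence all $\chi(D_j)$ equal, which contradicts the $m=3$-style sign argument used in Theorem~\ref{m=3}. If $A=1$, say $\alpha_1=\alpha$ and $\alpha_2=\dots=\alpha_m=\beta$, then the two relations in~(\ref{Esplitsys}) become $\alpha\cdot(m-1)\beta P=-\lambda$ and $\beta(\alpha+(m-2)\beta)P=-\lambda$; subtracting gives $\beta\bigl((m-1)\alpha-\alpha-(m-2)\beta\bigr)=0$, i.e. $(m-2)(\alpha-\beta)=0$, so $\alpha=\beta$ (as $m>3$), contradicting that the split is genuinely $(1,m-1)$ with $\alpha\ne\beta$.

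The real content is ruling out $(m/2,m/2)$, so suppose $A=B=m/2=:t$ with $t\ge2$. Here~(\ref{Esplitsys}) gives $\alpha\bigl((t-1)\alpha+t\beta\bigr)=\beta\bigl(t\alpha+(t-1)\beta\bigr)$, which simplifies to $(t-1)(\alpha^2-\beta^2)=t\alpha\beta-t\alpha\beta+\dots$ — more precisely, expanding yields $(t-1)\alpha^2+t\alpha\beta=t\alpha\beta+(t-1)\beta^2$, i.e. $\alpha^2=\beta^2$, hence $\alpha=-\beta$ (since $\alpha\ne\beta$). Then the common value of the two expressions in~(\ref{Esplitsys}) is $\alpha\bigl((t-1)\alpha-t\alpha\bigr)=-\alpha^2$, so $-\alpha^2 P=-\lambda$, giving $\alpha^2=\lambda/P>0$, which is consistent so far — the sign obstruction does \emph{not} by itself kill this case. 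Instead I would now bring in $\chi(\cD)=X=\sum_j\chi(D_j)=\bigl(t\alpha+t(-\alpha)\bigr)X=0$, contradicting the standing hypothesis $X\ne0$.

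I expect the even-split case to be the main obstacle, precisely because the naive nonnegativity argument fails there; the resolution is the observation that the balanced split forces $\sum_j\alpha_j=0$ and hence $X=\chi(\cD)=X\sum_j\alpha_j/(\sum\alpha_j)$ — more cleanly, $\chi(\cD)=\sum_j\chi(D_j)=\bigl(\sum_j\alpha_j\bigr)X$, so $\sum_j\alpha_j=1$ is forced by $X\ne0$, and $A\alpha+B\beta=1$ with $\alpha=-\beta$, $A=B$ gives $0=1$. This same identity $\sum_j\alpha_j=1$ is in fact the uniform tool: it should be stated once at the start of the proof (it is immediate from Lemma~\ref{Lbasics}(a) divided by $X\ne0$), and then each of the three forbidden splits is eliminated by combining it with~(\ref{Esplitsys}). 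The only genuinely delicate point is checking that in the $(1,m-1)$ and $(m/2,m/2)$ cases one really does get $\alpha\ne\beta$ from the meaning of "split" — but that is exactly how the split was defined, so no difficulty arises.
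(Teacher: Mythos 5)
Your proof is correct and follows essentially the same route as the paper: the normalization $\sum_j\alpha_j=1$ (the paper's Equation~(\ref{EL2})), the sign obstruction for the all-equal case, and the subtraction of the two instances of~(\ref{Exlam}) (which is exactly the paper's Equation~(\ref{Erst})) to force $\alpha=\beta$ in the $(1,m-1)$ case and $\alpha+\beta=0$ in the $(m/2,m/2)$ case, contradicting $A\alpha+B\beta=1$. The only cosmetic difference is that you re-derive the (\ref{Erst})-type identity directly rather than citing it.
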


\begin{proof}  We have
\begin{equation}
\label{EL2}
\alpha A + \beta B  = 1
\end{equation}
since $\sum_j x_j = X$,  and
\begin{equation}\label{EL3}
\alpha_j \left( \sum_{\ell \neq j} \alpha_\ell \right) = -\frac{\lambda}{X\bar{X}}
\end{equation}
from Equation (\ref{Exlam}). 

If $A=0$ (i.e., all the $\alpha_j$'s are equal), then $\beta=1/m$ by (\ref{EL2}). But this 
gives a positive value for the left-hand side of (\ref{EL3}) while the right-hand side is 
negative, a contradiction. 

Next, if $A=1$ and $B=m-1$, we employ (\ref{Erst}) to find $\alpha ((m-2)\beta) = 
\beta((m-2)\beta)$. Since $\beta\neq 0$ by Lemma \ref{Lparallelchi}, this gives 
$\alpha=\beta$, which is the case we just discussed.

Finally, if $m$ is even and $A=B$, then (\ref{Erst}) with $\alpha_r=\alpha$ and 
$\alpha_s=\beta$ gives $\alpha+\beta=0$. But this is impossible since, if $A=B=m/2$, 
we must have $\alpha + \beta = 2/m$ from (\ref{EL2}) . 
\end{proof}

For small values of $m$, the three cases handled in Lemma \ref{Lsplit} cover all or 
most of the possible splits.

\begin{thm}  
\label{m=4} 
 There does not exist a $(v,4,k,\lambda)$-SEDF  for any $v> 4$.
 \end{thm}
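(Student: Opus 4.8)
The plan is to obtain Theorem~\ref{m=4} as an almost immediate corollary of the results already assembled: the only real task is to verify that the character analysis applies, after which every admissible ``split'' of $(\alpha_1,\dots,\alpha_4)$ is already forbidden.

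First I would clear away the degenerate case $k=1$. Since $v>4=m$ and the $D_j$ are pairwise disjoint of size $k$, we have $4k\le v$; but when $k=1$ the remark following Equation~(\ref{Echi2}) (that a \emph{set} $\cD$ with $k=1$ must equal $G$) forces $m=v$, contradicting $v>m$. Equivalently, substituting a singleton $D_j=\{g\}$ into Equation~(\ref{E2}) makes the left-hand side a sum of $m-1$ distinct non-identity group elements, which can equal $\lambda(G-1)$ only if $\lambda=1$ and $m=v$. So we may assume $k>1$.

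Next I would confirm that the character machinery is available, i.e., produce a non-principal character $\chi$ with $\chi(\cD)\neq 0$. Certainly $\cD\neq\emptyset$; and $\cD=G$ would give $v=|\cD|=mk=4k$ with $k>1$, contradicting Lemma~\ref{num.lem}. Hence $\emptyset\neq\cD\neq G$, and Lemma~\ref{Lbasics}(d) yields a non-principal $\chi$ with $X:=\chi(\cD)\neq 0$. By Lemma~\ref{Lparallelchi} we may write $\chi(D_j)=\alpha_j X$ with each $\alpha_j$ a nonzero real number, and since $m=4>3$, Lemma~\ref{L<=2} forces the $\alpha_j$ to take at most two distinct values, say $\{\alpha_1,\dots,\alpha_4\}\subseteq\{\alpha,\beta\}$ with split $(A,B)$ satisfying $A\le B$ and $A+B=4$.

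Finally, the only possibilities are $(A,B)\in\{(0,4),(1,3),(2,2)\}$, which are precisely the splits $(0,m)$, $(1,m-1)$ and $(m/2,m/2)$ for $m=4$ — and all three are excluded by Lemma~\ref{Lsplit}. This contradiction completes the proof. I do not expect a genuine obstacle: the only point needing care is making sure that neither $k=1$ nor $\cD=G$ slips through, since either would block the passage to a character with $\chi(\cD)\neq 0$; once $v>m$ rules both out, Lemmas~\ref{L<=2} and~\ref{Lsplit} between them exhaust the admissible splits.
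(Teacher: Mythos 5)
Your proof is correct and follows the paper's argument exactly: rule out $\cD=G$ via Lemma~\ref{num.lem} (and $k=1$ via $v>m$), invoke Lemma~\ref{Lbasics}(d) to get a non-principal $\chi$ with $\chi(\cD)\neq 0$, and observe that the three splits excluded by Lemma~\ref{Lsplit} are precisely all splits possible when $m=4$. The paper's version is just terser, leaving the enumeration of splits and the $k=1$ case implicit.
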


\begin{proof}
 First, since $v > 4$, it follows from Lemma \ref{num.lem} that we cannot have 
 $\cD=G$.  Hence, from Lemma \ref{Lbasics}(d), there is a non-principal character 
 $\chi$ such that $\chi( \cD)\neq 0$. The result then follows from Lemma \ref{Lsplit}.
\end{proof}

The following also follows from Lemma \ref{Lsplit}.

\begin{thm} 
\label{splits} Suppose there is a $(v,m,k,\lambda)$-SEDF with $k > 1$.
\begin{itemize}
\item[(a)]
If $m=5$, the split for any $\chi\neq \ones$ with $\chi(\cD)\neq 0$ must be 
$(A,B)=(2,3)$. 
\item[(b)]
If $m=6$, the split for any $\chi\neq \ones$ with $\chi(\cD)\neq 0$ must be 
$(A,B)=(2,4)$.  $\Box$
\end{itemize}
\end{thm}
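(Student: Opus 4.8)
The plan is to reduce the statement entirely to the two structural lemmas already in hand, Lemma~\ref{L<=2} and Lemma~\ref{Lsplit}, and then dispatch both parts by a short enumeration of candidate splits. First I would observe that the hypotheses we need are all in force: since $k>1$ and (by the parameter equation) $v=mk$, Lemma~\ref{num.lem} precludes $\cD=G$, so in particular the character analysis of Section~3 is nonvacuous (this is exactly the $k>1$ regime flagged in the remark after Equation~(\ref{Echi2})). Fix an arbitrary non-principal character $\chi$ with $X=\chi(\cD)\neq 0$, as in the statement. Because $m=5$ or $m=6$ satisfies $m>3$, Lemma~\ref{L<=2} guarantees that $\{\alpha_1,\ldots,\alpha_m\}$ has at most two distinct values, so the split $(A,B)$ with $0\le A\le B$ and $A+B=m$ is well-defined, and Lemma~\ref{Lsplit} applies to it.

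For part (a), with $m=5$ the only possible splits are $(0,5)$, $(1,4)$, and $(2,3)$. Lemma~\ref{Lsplit} rules out $(0,5)$ and $(1,4)$; the case $(m/2,m/2)$ does not arise since $m$ is odd. Hence $(A,B)=(2,3)$ is forced, proving (a).

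For part (b), with $m=6$ the possible splits are $(0,6)$, $(1,5)$, $(2,4)$, and $(3,3)$. Lemma~\ref{Lsplit} eliminates $(0,6)$, $(1,5)$, and $(3,3)=(m/2,m/2)$, leaving only $(A,B)=(2,4)$, which proves (b).

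I do not expect any real obstacle here: the entire mathematical content has been absorbed into Lemmas~\ref{L<=2} and~\ref{Lsplit}, and Theorem~\ref{splits} is precisely the remark that, for the two smallest values of $m$ beyond those handled in Theorems~\ref{m=3} and~\ref{m=4}, those lemmas knock out every admissible split except one. The only point demanding any care is bookkeeping — confirming that the hypotheses $m>3$, $\chi\neq\ones$, and $X\neq 0$ hold throughout so that both lemmas may be invoked — and this is immediate from the standing assumptions. One could additionally remark that Lemma~\ref{Lbasics}(d) supplies at least one such character $\chi$, so the conclusion is not merely conditional on the existence of a non-principal $\chi$ with $\chi(\cD)\neq0$.
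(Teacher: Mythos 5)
Your proof is correct and takes essentially the same route as the paper, whose entire proof is the observation that the result follows from Lemma~\ref{Lsplit} by enumerating the possible splits for $m=5$ and $m=6$. (One small slip in your preamble: the parameter equation does not give $v=mk$; rather, $\cD=G$ would force $v=mk$, which Lemma~\ref{num.lem} excludes for $k>1$ --- but that aside is not needed for the conditional statement anyway.)
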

 
\begin{example}  
We analyze the case $m=5$. Restrict to some $\chi\neq \ones$  with 
$X:=\chi(\cD) \neq 0$.  From Theorem \ref{splits}, without loss of generality, 
there exist real numbers $\alpha$ and $\beta$ with
 $$ \alpha_1 = \alpha_2 = \alpha \ \ \neq \ \ \beta  = \alpha_3 =  \alpha_4 = \alpha_5 . $$
Equation (\ref{Erst}) with $r=1$, $s=3$ gives $\alpha+2\beta=0$. Combining this 
with (\ref{EL2}), we find
 $$ \alpha = 2, \quad \beta=-1 ~. $$
 So 
 $$ \chi( D_r + D_s + D_t ) = 0 $$
whenever $r\in \{1,2\}$ and $s,t\in \{3,4,5\}$.  This gives us  $ 2 |X|^2 = \lambda $
using Equation (\ref{Echi2}), which seems perfectly valid at this point. We have six 
different proper subsets of $G$ over which the character $\chi$ sums to zero.  This 
happens (for some choices of $\chi$) when the subset is a coset of a proper 
subgroup of $G$, so perhaps this is possible.  
\end{example}

\subsection{The case of prime $v$}

In this section, we make use of some results of Lam and Leung \cite{LL}. We first 
summarize the material we need from their paper in a single theorem.

\newcommand{\cE}{\mathcal{E}}

\begin{thm}\cite{LL} 
\label{TLL}
Let $G$ be a finite cyclic group of order $v$, 
where $v$ has  prime power factorization $v=p_1^{r_1}\cdots p_s^{r_s}$
and let $\chi$ be a non-principal character of $G$. Let $\cE \in \ints[G]$, say
$$ \cE = \sum_{g\in G} n_g g . $$
\begin{itemize}
\item[(a)]
If   $\chi( \cE )=0$,   then there exist nonnegative integers $k_1,\ldots, k_s$ such that
$$ \sum_{g \in G} n_g = k_1 p_1 + \cdots + k_s p_s. $$
\item[(b)] 
If $v$ is prime and $\cE \subseteq G$ with $\chi(\cE)=0$, then $\cE=\emptyset$ 
or $\cE=G$.
\item[(c)]
If $v=pq$, where $p$ and $q$ are distinct primes, and $\cE \subseteq G$ with 
$\chi(\cE)=0$,  then there exists a nonnegative integer $k$ such that $|\cE|=kp$ or 
$|\cE| = kq$.  Further, $\cE$ is expressible as a disjoint union of cosets of some 
proper subgroup $H$ of $G$ (where $|H|=p$ or  $|H|=q$).
\end{itemize}
\end{thm}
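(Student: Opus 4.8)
The statement above is a translation into group-algebra/character language of results of Lam and Leung \cite{LL} on vanishing sums of roots of unity, so the plan is to phrase the hypothesis $\chi(\cE)=0$ in their terms and then quote their theorems. Fix a generator $x$ of $G$ and set $\omega=e^{2\pi i/v}$; since $\chi\neq\ones$, $\chi(x)$ is a $v$-th root of unity (indeed a primitive $d$-th root of unity for $d=\mathrm{ord}(\chi)>1$, $d\mid v$, but for part (a) we only need that it is some $v$-th root of unity). Writing $\cE=\sum_{j}n_{x^j}x^j$ and collecting equal powers, $\chi(\cE)=\sum_{i=0}^{v-1}c_i\omega^i$ where $c_i=\sum\{\,n_{x^j}:tj\equiv i\pmod{v}\,\}\ge 0$ (with $\chi(x)=\omega^t$) and $\sum_i c_i=\sum_g n_g$. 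Thus $\chi(\cE)=0$ says exactly that $\sum_i c_i\omega^i=0$ is a vanishing sum of $v$-th roots of unity with nonnegative integer coefficients, whose total weight is $\sum_g n_g$.

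For part (a) this is precisely the setting of the main structure theorem of \cite{LL}: the monoid of vanishing $\ints_{\ge 0}$-combinations of $v$-th roots of unity is generated by the elementary relations $\omega^i\bigl(1+\omega^{v/p}+\cdots+\omega^{(p-1)v/p}\bigr)=0$, one for each prime $p\mid v$ and each $i$, each of total weight $p$. Hence $\sum_g n_g$ lies in $\ints_{\ge 0}p_1+\cdots+\ints_{\ge 0}p_s$, which is (a). Part (b) is the special case $v=p$ prime: then $\cE\subseteq G$ forces $0\le|\cE|=\sum_g n_g\le p$, and (a) gives $|\cE|=k_1p$, so $|\cE|\in\{0,p\}$, i.e.\ $\cE=\emptyset$ or $\cE=G$. (Directly: for $v=p$ the minimal polynomial of $\omega$ is $1+x+\cdots+x^{p-1}$, so a vanishing integer combination of $1,\omega,\dots,\omega^{p-1}$ has all coefficients equal; being $0$ or $1$ here, they are all $0$ or all $1$.)

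For part (c) I would take $\chi$ faithful (if $\mathrm{ord}(\chi)\in\{p,q\}$ one applies the one-prime argument of (b) on the quotient $G/\ker\chi$ to see $|\cE\cap C|$ is constant over the cosets $C$ of $\ker\chi$, which already forces $|\cE|$ to be a multiple of $[G:\ker\chi]\in\{p,q\}$; the coset decomposition below is the clean statement for the faithful case, which is the one used in the applications). With $\chi$ faithful, $g\mapsto\chi(g)$ identifies $G$ with the group $\mu_v$ of $v$-th roots of unity, and the Chinese Remainder isomorphism $\mu_v\cong\mu_p\times\mu_q$ lets us index the roots by pairs. The two-prime case of \cite{LL} says every vanishing $\ints_{\ge 0}$-combination of $v$-th roots of unity is a $\ints_{\ge 0}$-combination of rotations of $1+\zeta_p+\cdots+\zeta_p^{p-1}$ and of rotations of $1+\zeta_q+\cdots+\zeta_q^{q-1}$; a rotation of the first kind is supported, with all coefficients equal to $1$, on a coset of $\mu_p$, hence (pulling back through $\chi$) on a coset of the subgroup $H_p\le G$ of order $p$, and symmetrically for the second kind. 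Since here all $c_i\in\{0,1\}$, a rotation of the first kind and one of the second kind cannot both occur (their supports meet in one coordinate, which would then have coefficient $\ge 2$), and no two rotations of the same kind can share a coset; therefore $\cE$ is a disjoint union of cosets of $H_p$, or of cosets of $H_q$, and in particular $|\cE|=kp$ or $|\cE|=kq$, giving (c).

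The bookkeeping above is routine; the substance lies entirely in the two theorems of \cite{LL} invoked — the general fact that coefficient-sums of vanishing sums of $n$-th roots lie in the prime-generated submonoid of $\ints_{\ge 0}$, and the finer explicit description of the relation monoid when $n$ has at most two prime divisors. The one genuinely delicate point in the write-up is isolating the faithful case in part (c): only for $\mathrm{ord}(\chi)=v$ does the argument yield the stated decomposition of $\cE$ into cosets of $H_p$ or $H_q$, so for arbitrary non-principal $\chi$ one should either restrict to faithful $\chi$ or retreat to the weaker cardinality conclusion obtained via (b) on $G/\ker\chi$.
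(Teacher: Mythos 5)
Your proposal follows essentially the same route as the paper's own justification: parts (a) and (b) are quoted from Lam--Leung, and part (c) is obtained by combining their two-prime result with the observation that a coset of the order-$p$ subgroup and a coset of the order-$q$ subgroup always meet, so a $\{0,1\}$-valued coefficient vector cannot mix the two kinds of elementary relations. Two remarks. First, your caveat about faithfulness in (c) is a genuine catch, not pedantry: as stated for an arbitrary non-principal $\chi$, the ``Further'' clause of (c) is false. Take $G=\ints_6$ (written additively), $\chi$ the character of order $2$, and $\cE=\{0,1\}$; then $\chi(\cE)=1-1=0$ and $|\cE|=2$, but $\cE$ is not a disjoint union of cosets of the subgroup of order $2$ or of order $3$. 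The paper's own one-paragraph justification of (c) (``by the Chinese Remainder Theorem we cannot have both $k$ and $k'$ nonzero\dots'') tacitly assumes the coset decomposition and hence is subject to the same caveat. Your quotient argument shows that the cardinality clause survives for non-faithful $\chi$ (there $|\cE|$ is a multiple of $\mathrm{ord}(\chi)$), and that clause is the only part of (c) actually used later (to derive $k(m-2)\equiv 0 \bmod p$); the coset-decomposition clause should indeed be restricted to faithful $\chi$, exactly as you say. Second, a small inaccuracy in your justification of (a): the monoid of vanishing nonnegative integer combinations of $v$-th roots of unity is \emph{not} generated by the rotated prime relations once $v$ has three or more distinct prime divisors---this is precisely the subtlety Lam and Leung address. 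What their main theorem gives directly is the weaker statement you actually need, namely that the total weight lies in $\ints_{\ge 0}p_1+\cdots+\ints_{\ge 0}p_s$, so (a) stands; and for $s\le 2$, which is all that (b) and (c) require, the generation claim is true and your argument there is exactly right.
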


The last statement (part (c)) is not directly given in the paper of Lam and Leung. 
But it follows immediately from their results.  They prove in \cite{LL} that, if 
$\cE=\sum_{g\in G} n_g g$ is a multiset (i.e., all  $n_g\ge 0$),  then $\sum_{g} n_g$
is expressible as $kp+k'q$ for non-negative integers $k$ and $k'$.  But, by the 
Chinese Remainder Theorem, we cannot have both $k$ and $k'$  nonzero when 
$\cE$ is simply a subset of $G$ (because any coset of a subgroup  of order $p$ 
intersects any coset of a subgroup of order $q$).

Here is the main theorem of this section, which completely handles the case 
where the group $G$ has prime order.

\begin{thm} If $v$ is prime, $k>1$ and $m>2$, then there does not exist a
$(v,m,k,\lambda)$-SEDF.
\end{thm}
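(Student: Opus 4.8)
The plan is to combine the structural constraints on the coefficients $\alpha_j$ obtained in the previous sections with the arithmetic vanishing criterion of Lam and Leung (Theorem~\ref{TLL}(b)). Fix a non-principal character $\chi$; since $v$ is prime, $v \neq mk$ (as $m > 2$ would then force $k=1$ via Lemma~\ref{num.lem}, contrary to $k>1$), so $\cD \neq G$, and $\cD \neq \emptyset$ since $k \geq 1$. Hence by Theorem~\ref{TLL}(b) we have $\chi(\cD) \neq 0$ for \emph{every} non-principal $\chi$. So the analysis of Lemmas~\ref{Lparallelchi}--\ref{Lsplit} applies to every non-principal $\chi$: there are real numbers $\alpha_1, \dots, \alpha_m$ (depending on $\chi$) with $\chi(D_j) = \alpha_j \chi(\cD)$, taking at most two distinct values $\alpha, \beta$ with split $(A,B)$, $A \leq B$.

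Next I would exploit the key observation that whenever $\alpha_r \neq \alpha_s$, Equation~(\ref{Erst}) forces some sub-sum of the $\alpha_\ell$ to vanish, which means $\chi(D_{r'} + \cdots + D_{s'}) = 0$ for some proper sub-collection of the $D_j$'s; but that sub-collection is a proper nonempty subset of $G$ of size strictly between $0$ and $v$ (it has size $jk$ for some $1 \leq j \leq m-1$, and $jk < mk < v$ since $v$ is prime and $v \neq mk$ would need checking, but in any case $jk \le (m-1)k < v$ once $v>mk$ is ruled out... actually $jk \le (m-1)k$; if this equals $v$ then $v \mid (m-1)k$, and primality plus $k<v$, $m-1<v$ gives a contradiction). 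This \emph{directly} contradicts Theorem~\ref{TLL}(b), which says the only subsets of $G$ (prime order) on which a non-principal $\chi$ vanishes are $\emptyset$ and $G$. Therefore no proper sub-sum of the $\alpha_\ell$ can vanish, which by Lemma~\ref{L<=2}'s argument forces $\alpha_1 = \cdots = \alpha_m$, i.e.\ the split is $(0,m)$. But Lemma~\ref{Lsplit} (for $m > 3$) rules out the split $(0,m)$; and for $m = 3$, Theorem~\ref{m=3} already disposes of the case unconditionally. Since $m > 2$, either $m = 3$ (done) or $m \geq 4$, and in the latter case the split $(0,m)$ is forbidden — contradiction.

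The one case needing separate care is when the two values $\alpha, \beta$ are genuinely present but every sub-sum appearing in~(\ref{Erst}) happens to be nonzero; here Lemma~\ref{Lsplit} already forbids $(0,m)$, $(1,m-1)$, $(m/2,m/2)$, so for $m = 4$ we are immediately done (Theorem~\ref{m=4}), and for $m = 5, 6$ Theorem~\ref{splits} pins the split down to $(2,3)$ or $(2,4)$, at which point the explicit computation in the worked Example (yielding $\alpha = 2$, $\beta = -1$ for $m=5$) shows $\chi(D_r + D_s + D_t) = 0$ for a three-set sub-collection of total size $3k$, again a proper nonempty subset of $G$ — contradicting Theorem~\ref{TLL}(b). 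For general $m$ the cleanest route is: take indices $r,s$ with $\alpha_r \neq \alpha_s$ (which exist unless the split is $(0,m)$, already excluded); then~(\ref{Erst}) gives $\sum_{\ell \neq r,s} \alpha_\ell = 0$, hence $\chi\!\left(\sum_{\ell \neq r,s} D_\ell\right) = 0$ with $0 < (m-2)k < v$, contradicting prime-order vanishing.

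The main obstacle I anticipate is the bookkeeping around sizes: one must confirm that every sub-collection $\sum_{\ell \in S} D_\ell$ arising above has cardinality $|S|\,k$ strictly between $0$ and $v$. Since the $D_j$ are pairwise disjoint this cardinality is exactly $|S|k$ with $1 \leq |S| \leq m-1$, so $k \leq |S|k \leq (m-1)k$. As $k > 1$ the lower bound exceeds $0$; for the upper bound, if $(m-1)k \geq v$ then (using $v$ prime, $k < v$, $m - 1 < v$) one checks $v \nmid (m-1)k$ unless $v \mid k$ or $v \mid m-1$, both impossible in range — but in fact we only need $|S|k \ne v$, and should $\cD = G$ (i.e.\ $mk = v$) occur it is already excluded by Lemma~\ref{num.lem}, while $|S| \le m-1$ gives $|S|k \le (m-1)k < mk = $ (a value $\ne v$ unless $mk=v$); a short argument closes this. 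Once the size check is in hand, the contradiction with Theorem~\ref{TLL}(b) is immediate and the theorem follows.
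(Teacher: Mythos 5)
Your proposal is correct and follows essentially the same route as the paper: exclude the all-equal split via Lemma \ref{Lsplit} (and Theorems \ref{m=3}, \ref{m=4} for small $m$), pick $r,s$ with $\alpha_r\neq\alpha_s$ so that Equation (\ref{Erst}) forces $\chi\bigl(\sum_{\ell\neq r,s} D_\ell\bigr)=0$ on a set of size $(m-2)k$ strictly between $0$ and $v$, contradicting Theorem \ref{TLL}(b). The only (harmless) variation is that you note $\chi(\cD)\neq 0$ for \emph{every} non-principal $\chi$ via Theorem \ref{TLL}(b), where the paper only needs one such character from Lemma \ref{Lbasics}(d).
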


\begin{proof} We can assume $m \geq 5$ in view of Theorems \ref{m=3} and 
\ref{m=4}. Select $\chi\neq \ones$ with $\chi(\cD)\neq 0$. By Lemma \ref{Lsplit}, 
we have two distinct real numbers $\alpha,\beta$ such that $\alpha_j
\in \{\alpha,\beta\}$ for $1\le j \le m$, with each value occurring at least twice. 
Choose $r$ and $s$ with  $\alpha_r=\alpha$ and $\alpha_s = \beta$. Then 
Equation (\ref{Erst}) forces 
\begin{equation}
\label{chi=0}
 \chi \left( \sum_{\ell \neq r,s} D_\ell \right) = 0. 
 \end{equation}
However, the set \[\bigcup_{\ell \neq r,s} D_\ell \] 
has cardinality $k(m-2)$ which lies strictly between $0$ and $v$. This
contradicts Theorem \ref{TLL}(b).  
\end{proof}

\subsection{The case $v = pq$, where $p$ and $q$ are distinct primes}

Suppose $v = pq$, where $p$ qnd $q$ are distinct primes. We assume $k >1$ and 
$m > 2$.  We have the following:

\begin{eqnarray}
k^2 (m-1) & = & \lambda (v-1) \label{14}\\
v &=& pq\label{15}\\
k(m-2) &\equiv & 0 \bmod p\label{16}\\
km & < & v\label{17}.
\end{eqnarray}
Note that (\ref{14}) is just (\ref{Epars}). The congruence
(\ref{16}) follows from (\ref{chi=0}) and Theorem \ref{TLL}(c), where
without loss of generality we can assume that (\ref{16}) holds for  the prime 
divisor $p$  (if not, we can interchange $p$ and $q$). 
Finally, (\ref{17}) is just Lemma \ref{SEDF-nonexistence}.

 From (\ref{16}), there are two possible cases to consider:
\begin{description}
\item[case 1:]
$p \mid k$, or 
\item[case 2:] $p \mid (m-2)$. 
\end{description}
We now derive some necessary conditions that must hold if we are in case 1. 
Suppose $p \mid k$;  then $k = sp$, where $s$ is a positive integer.
From (\ref{14}), we have $p^2 \mid \lambda$, so let $\lambda = tp^2$, where 
 $t$ is a positive integer. Then (\ref{14}) becomes
\[ s^2 (m-1) = t(v-1).\]
Now, $spm = km < v = pq$, so $sm < q$.
Further,
\[ t(v-1) = s^2 (m-1) < s^2 m \leq qs,\]
so we have
\[ s > \frac{t(v-1)}{q}.\]
Then,
\[k = sp >  \frac{pt(v-1)}{q} \geq \frac{p(v-1)}{q}.\]
Now, if $k > v/5$, then $m < 5$, which is impossible by
Theorems \ref{m=3} and \ref{m=4}.
Therefore,
\begin{equation}
\label{ratio} \frac{p}{q} \leq \frac{v}{5(v-1)} = \frac{1}{5} + \frac{1}{5(v-1)}.
\end{equation}

This inequality (\ref{ratio}) can sometimes be used to rule out certain parameter 
situations. However, there certainly are parameter sets that satisfy all the 
necessary conditions given above.

\begin{example}
Suppose $p = 7$, $q = 31$, $v = pq = 217$, $m = 9$, $k=9$ and 
$\lambda = 3$.  Here $m \equiv 2 \bmod p$, $k^2 (m-1) = 648 = \lambda (v-1)$ 
and $km = 81 < v$. So the above arguments do not rule out the existence of a 
$(217,9,9,3)$-SEDF.
\end{example}

\section{Conclusion}

We posted this paper on ArXiv on 20 October, 2016. 
Shortly after that, there followed a flurry of work on the topic of SEDFs by a 
variety of researchers \cite{BJWZ,HP,JL,WYF,WYFF}; all of these preprints 
have also appeared on ArXiv. These papers contain a number of interesting 
results, including constructions as well as new nonexistence results. There 
are now a number of known constructions of SEDFs for the case $m=2$,
as well a single nontrivial example with $m > 2$, namely, a $(243,11,22,20)$-SEDF
that is presented in \cite{JL,WYF}. We encourage readers who are interested in 
SEDFs to read the above-referenced preprints.

\section*{Acknowledgements}

WJM thanks the Cheriton School of Computer Science (University of Waterloo)
for hosting him while the initial work on this project was done. 
He is also grateful to Jim Davis and the University of Richmond for 
hosting a helpful workshop related to this subject.

Thanks to the referee, and to Jonathan Jedwab and Shuxing Li, for helpful 
comments on earlier versions of this paper.

Research of DRS  is supported by  NSERC discovery grant RGPIN-03882.



\begin{thebibliography}{XX}


\bibitem{BJWZ}
J.\ Bao, L.\ Ji, R.\ Wei and Y.\ Zhang.
New existence and nonexistence results for strong external difference families.
{\tt arXiv:1612.08385v2}, 30 Dec.\ 2016.

\bibitem{Buratti}
M.\ Buratti.
Old and new designs via difference multisets and strong difference families.
{\it J.~Combin.~Des.} {\bf 7} (1999), 406--425.

\bibitem{BurattiPasotti}
M.\ Buratti and A.\ Pasotti.
Combinatorial designs and the theorem of Weil on multiplicative character sums.
{\it Finite Fields Appl.} {\bf 15} (2009),  332--344.

\bibitem{CDFPW} R. Cramer, Y. Dodis, S. Fehr, C. Padr\'{o}, D. Wichs.
Detection of algebraic manipulation with applications to robust secret sharing and fuzzy extractors.
{\it Lecture Notes in Comput.~Sci.} {\bf 4965} (2008), 471--488 (Eurocrypt 2008).

\bibitem{CFP} R.\ Cramer, S.\ Fehr  and C.\ Padr\'{o}. 
Algebraic manipulation detection codes.
{\it Sci.~China Math.} {\bf 56} (2013), 1349--1358.

\bibitem{CPX}
R.\ Cramer. C.\ Padr\'{o} and C.\ Xing.
Optimal algebraic manipulation detection codes in the constant-error model.
{\it Lecture Notes in Comput.~Sci.} {\bf 9014} (2015), 481--501 
(TCC 2015).

\bibitem{HP}
S.\ Huczynska and M.B.\ Paterson.
Existence and non-existence results for strong external difference families.
 {\tt arXiv:1611.05652v1}, 17 Nov.\ 2016.

\bibitem{JL}
J.\ Jedwab and S.\ Li.
Construction and nonexistence of strong external difference families.
 {\tt arXiv:1701.05705v2}, 28 Jan.\ 2017.

\bibitem{LL} T.Y.\ Lam and K.H.\ Leung. 
On vanishing sums of roots of unity.
{\em J.~Algebra} {\bf 224} (2000), 91--109.

\bibitem{leder} 
W.~Ledermann. {\em Introduction to Group Characters, 2nd Ed.}, 
Cambridge University Press, 1987.

\bibitem{Lev}
V.I.\ Levenshtein. 
One method of constructing quasilinear codes providing synchronization in the presence of errors.
{\it Probl.~Inf.~Transm.} {\bf 7}  (1971), 215--222.	

\bibitem{Momi}
K.\ Momihara.
Strong difference families, difference covers, and their applications for relative difference families.
{\it Des.~Codes Cryptogr.} {\bf 51} (2009), 253--273.

\bibitem{OKSS}
W.\ Ogata, K.\ Kurosawa, D.R.\ Stinson and H.\ Saido.
New combinatorial designs and their applications to
authentication codes and secret sharing schemes. 
{\it Discrete Math.} {\bf 279} (2004), 383--405.

\bibitem{PS}
M.B.\ Paterson and D.R.\ Stinson.
Combinatorial characterizations of algebraic manipulation detection codes involving generalized 
difference families. {\em Discrete Math.} {\bf  339} (2016), 2891--2906.

\bibitem{WYF}
J.\ Wen, M.\ Yang, K.\ Feng.
The $(n,m,k,\lambda)$-strong external difference family with $m\geq 5$ exists.
 {\tt arXiv:1612.09495v1}, 30 Dec.\ 2016.

\bibitem{WYFF}
J.\ Wen, M.\ Yang, F.\ Fu and  K.\ Feng.
Cyclotomic construction of strong external difference families in finite fields.
 {\tt arXiv:1701.01796v1}, 7 Jan.\ 2017.

\end{thebibliography}
\end{document}